\numberwithin{equation}{section}
\newcommand{\Z}{\mathbb Z}
\renewcommand{\Re}{\mathcal{R}_\varphi(\varepsilon)}
\newtheorem{theorem}{Theorem}[section]%
\newtheorem{lemma}[theorem]{Lemma}
\newtheorem{corollary}[theorem]{Corollary}
\newtheorem{proposition}[theorem]{Proposition}
\newcommand{\be}{\begin{equation}}
\newcommand{\ee}{\end{equation}}
\newtheorem*{mtthm}{Main Technical Theorem}
\author[Marks Ruziboev]{Marks Ruziboev }
\address{International School for Advanced Studies (SISSA)\\
Via Bonomea 265, Trieste, Italy
\\International Center for Theoretical
Physics (ICTP)\\
Strada Costiera 11, Trieste, Italy} \email[Marks
Ruziboev]{mruziboe@sissa.it}
\title[Decay of correlations non-H\"older observables]
{Decay of correlations for invertible maps with
non-H\"older observables}
\date{November 27, 2014}
\subjclass[2000]{37A05, 37A25}
\keywords{Decay of correlations, H\'enon map, Solenoid map, Young
Towers.}
\begin{document}

\begin{abstract}
An invertible dynamical system with some hyperbolic structure is
considered. Upper estimates for the correlations of continuous
observables is given in terms of  modulus of continuity. The result
is applied to  certain H\'enon maps and Solenoid maps with
intermittency.
\end{abstract}

\maketitle

\section{Introduction}
Let $f:M\to M$ be a map preserving a probability measure $\mu.$ The
system $(f, \mu)$ is called mixing if
$$ |\mu(f^{-n}A\cap
B)-\mu(A)\mu(B)|\to 0\,\,\text{as}\,\, n\to \infty $$ for any
$\mu$-measurable $A$ and $B.$ One of the manifestations of chaos in
a dynamical systems is this mixing property.  In particular, speed
of the convergence above is a measure of the strength of chaos in
the system $(f, \mu).$ Known counterexamples show that in general
there is no specific rate at which this convergence to zero happens.
One way to overcome this difficulty is to generalize the problem by
defining the correlation of observables $\varphi,\psi:M\to \mathbb
R$
$$C_n(\varphi, \psi; \mu):=
\left|\int(\varphi\circ f^n)\psi d\mu-\int\varphi d\mu\int\psi
d\mu\right|.$$ In this way sometimes it is possible to obtain
specific rates of decay of correlations, for observables with
certain regularity.

The rates of decay of correlations have been studied extensively
over thirty years. In their pioneering works Sinai \cite{S1}, Ruelle
\cite{R}, Bowen \cite{B} showed that every uniformly hyperbolic
system admits a special invariant measure (that is now called
SRB-measure) with exponential decay of correlations for H\"older
continuous observables. It turns out that generalizing the above
results to the systems with singularities or to the systems with
weaker hyperbolic properties is a very challenging problem and there
much progress has been made in recent years by  many authors here we
give just a few of them \cite{AlvPin, BY2, BLS, Chern, ChM, DHL,
Dol, Gou, LY, Li1, Li2, Y1, Y2}.

Notice that in the above works observables are assumed to be
H\"older continuous, or functions of bounded variation. Hence a
natural question to ask is how much we can generalize the classes of
observables that still admit some decay rate?  Several papers
address this question  in the context of one-sided subshifts of
finite type on a finite alphabet for example, see \cite{BFG, FL,
KMS, I, P}. For a comprehensive discussion of shift maps and their
ergodic properties we refer to \cite{Ba}. Moreover, there are
results on non-invertible Young Towers,  for example \cite{BM, KMS,
Ly, M}, and results that apply directly to certain non-uniformly
expanding systems \cite{PY}. We emphasize that all of the results we
above are for non-invertible maps and in the invertible case the
only reference we found is \cite{Z} which gives interesting
estimates for billiards with non-H\"older observables.

The aim of this work is to fill this gap, and show that the decay
rate of correlations for continuous observables is given in terms of
natural quantities, such as the modulus of continuity,  for systems
that admit Gibbs-Markov-Young (GMY)-structure, to be described
below.


The rest of the paper is organized as follows. In the next section
we give the definition of GMY-structure, state the main technical
theorem and its applications to H\'enon maps and Solenoid maps with
intermittent fixed point. In section 3 we give the proof of the main
technical theorem, and in the last section we prove the results for
the H\'enon maps and Solenoid maps.

\section{Statement of results}

Let $M$ be  a metric space and $\mathcal C(M)$ be the space of continuous
functions defined on it.  Define modulus of continuity for
$\varphi\in\mathcal  C(M)$ as
$$\mathcal{R}_\varphi(\varepsilon)=\sup\{|\varphi(x)-\varphi(y)|:\,\,
d(x, y)<\varepsilon\}.
$$
Obviously, $\Re\to 0 $ as $\varepsilon\to 0$ if and only if
$\varphi$ is uniformly continuous. If $\varphi$ is H\"older
continuous  with   H\"older exponent $\alpha$ then  $\Re\lesssim
\varepsilon^\alpha.$  \footnote{Throughout the paper we use the
notation $a\lesssim b$ if there exists a constant $C$ independent of
$\varepsilon$ such that $a\le Cb.$}  In general, $\Re$ can converge
to $0$ very slowly, and the rate of convergence is slower than the
rate of convergence of $\varepsilon$ to $0$ even for H\"older
observables. We refer to \cite{Hol} for  the examples of various
slow rates of convergence.

 Let $a_n$ be a sequence of
positive numbers such that $\lim_{n \to\infty}a_n=0,$ and let
$\mathcal F(M)$ be a Banach space of observables defined on $M.$ We
say $(f, \mu)$ has decay of correlations at rate $a_n$ for functions
in $\mathcal F(M)$ if for any $\varphi, \psi\in \mathcal F(M)$ there
exists a constant $C=C(\varphi, \psi, f)$ such that
$$C_n(\varphi, \psi; \mu)\le Ca_n.$$

Below we give  applications of our main technical result to H\'enon
maps and Solenoid maps with intermittency.

\subsection{H\'enon maps}
Let $T_{a, b}:\mathbb{R}\to \mathbb{R}$ denote  the H\'enon map i.e.
$$T_{a, b}(x, y)=(1-ax^2+y, bx).$$

In \cite{BC, BY1, BY2} it was shown that there exists positive
measure set  $\mathcal A$ of parameters $(a, b)$ such that for any
$(a, b)\in\mathcal A$ corresponding $T_{a, b}$ admits unique
SRB-measure which is mixing and the speed of mixing is exponential
for H\"older continuous observables.  Here we assume that $(a,
b)\in\mathcal A,$ and investigate the problem of decay of
correlations for continuous observables.  More precisely, let
$\mathcal C(\mathbb R^2)$ be the space of all continuous real valued
functions. Let
$$\mathcal R_{\varphi, \psi}(\varepsilon)=\max\{\mathcal
R_\varphi(\varepsilon), \mathcal R_\psi(\varepsilon)\}.$$

\begin{theorem}\label{main} Let $f=T_{a,b}$ and $(a, b)\in\mathcal A$.
 Then there exists $\theta\in(0, 1)$ such that  for any
$\varphi,\psi \in \mathcal C(\mathbb R^2)$
 \be \label{bound} C_n(\varphi, \psi;
\mu)\le 2(\|\varphi\|_\infty+\|\psi\|_\infty)\mathcal R_{\varphi,
\psi}(\theta^n)+C{\theta}^n
 \ee
where constant $C$ depends on $\varphi,$ $\psi$ and $f.$
\end{theorem}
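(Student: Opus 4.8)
The plan is to reduce the Hénon case to the Main Technical Theorem about GMY-structures, which (from the excerpt's stated organization) must provide a correlation bound of the type in (\ref{bound}) for any system admitting such a structure. So the first and crucial step is to invoke the result of \cite{BC, BY1, BY2} that for $(a,b)\in\mathcal A$ the map $T_{a,b}$ admits a Gibbs-Markov-Young structure (a Young tower with exponential return-time tails) supporting the SRB-measure $\mu$. Once this structure is in place, the Main Technical Theorem applies directly and yields the asserted estimate with some $\theta\in(0,1)$ coming from the exponential tail/contraction rates of the tower. The substance of the proof is therefore the verification that the quantities appearing in the abstract GMY bound specialize to $\|\varphi\|_\infty$, $\|\psi\|_\infty$, and the joint modulus of continuity $\mathcal R_{\varphi,\psi}$.

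Let me sketch how the bound itself arises, since that is what the Main Technical Theorem is presumably proving. The idea I would use is the standard approximation-by-smoothing argument: approximate the continuous observables $\varphi,\psi$ by H\"older (or Lipschitz) functions $\varphi_\delta,\psi_\delta$ at scale $\delta$, for which the system already has exponential decay of correlations. One splits
\[
C_n(\varphi,\psi;\mu)\le C_n(\varphi_\delta,\psi_\delta;\mu)+\bigl(\text{error from replacing }\varphi\text{ by }\varphi_\delta\bigr)+\bigl(\text{error from replacing }\psi\text{ by }\psi_\delta\bigr).
\]
The first term is controlled by the exponential decay for H\"older observables, giving something like $\mathrm{const}\cdot\theta^n$ (after absorbing the growth of the H\"older constant of $\varphi_\delta$ in $\delta$ into the choice of $\delta=\delta_n$). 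The replacement errors are bounded in terms of $\|\varphi-\varphi_\delta\|$ and $\|\psi-\varphi_\delta\|$, which are in turn controlled by the modulus of continuity $\mathcal R_{\varphi,\psi}(\delta)$, together with the $L^\infty$-norms that multiply the modulus in (\ref{bound}). Optimizing, or rather choosing the smoothing scale to be $\delta=\theta^n$, balances the two contributions and produces exactly the two-term bound $2(\|\varphi\|_\infty+\|\psi\|_\infty)\mathcal R_{\varphi,\psi}(\theta^n)+C\theta^n$.

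The hard part, and where the Main Technical Theorem does its real work, is making the exponential-decay input uniform enough that the error constants do not blow up uncontrollably as $\delta\to 0$. In the tower framework this is handled by working directly with the coupling or operator-renewal machinery on the quotient tower, where functions are lifted and the correlation is estimated return-time by return-time; the modulus of continuity enters through the variation of the lifted observable along stable and unstable leaves. I expect the genuinely delicate point to be the control along the contracting (stable) direction for the \emph{invertible} map: one must show that pushing $\varphi$ forward under $f^n$ and then projecting to the quotient introduces an error governed by $\mathcal R_\varphi(\theta^n)$ rather than by any H\"older seminorm, so that no regularity beyond continuity is needed. For the Hénon application specifically, the only remaining task after citing the GMY-structure is to confirm that the SRB-measure produced in \cite{BY2} is the measure for which the tower is built, so that the abstract $\theta$ and $C$ transfer to the statement; this is routine given the construction.
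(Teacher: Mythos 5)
Your reduction starts correctly: by \cite{BY2}, for $(a,b)\in\mathcal A$ the map $T_{a,b}$ admits a Young tower whose return-time tail decays exponentially, so item (i) of the Main Technical Theorem gives $u_n\le C'{\theta'}^n$. But there is a genuine gap in your claim that the Main Technical Theorem then ``applies directly'' to yield \eqref{bound}. That theorem bounds the correlations by
\[
2(\|\varphi\|_\infty+\|\psi\|_\infty)\max\{\mathcal R_\varphi(\delta_n),\mathcal R_\psi(\delta_n)\}+u_n,
\]
where $\delta_n=\sup\{\operatorname{diam}\pi(F^n(P)):P\in \mathcal P_{2n}\}$ is a geometric quantity measuring the diameters of projected tower-partition elements; the exponential tail controls only $u_n$, not $\delta_n$. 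To obtain \eqref{bound}, with $\theta^n$ inside the modulus of continuity, one must \emph{additionally} prove that $\delta_n$ decays exponentially, and this is the substantive content of the paper's proof of Theorem \ref{main}: it introduces the auxiliary stopping-time partitions $\mathcal Q_k$ and the quantity $\bar\delta_k$, invokes the estimate \eqref{diameter} from \cite{AlvPin}, and proves a lemma that $\bar\delta_k\le C{\beta'}^k$ using the backward contraction property \eqref{contr} on unstable leaves, which is specific to the Hénon tower construction of \cite{Y1,BY2}. The stable contraction (A3) alone cannot give this, because partition elements have extent in the unstable direction and their images under $f^\ell$ can grow during excursions up the tower; the case analysis over $\ell<R(P)-k$ and $\ell\ge R(P)-k$, with the sums of intermediate return times, is exactly what tames this growth. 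Your proposal treats this step as automatic (``the only remaining task \dots is routine''), so the key idea is missing.

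A secondary point: the smoothing argument you sketch for the Main Technical Theorem (approximate $\varphi,\psi$ by H\"older functions at scale $\delta=\theta^n$ and invoke exponential decay for H\"older observables) is not the paper's mechanism, and it cannot be, in the stated generality: for towers with stretched-exponential or polynomial tails there is no exponential H\"older decay to trade against the blow-up of the H\"older constants, yet the theorem covers those cases. The paper instead approximates the \emph{lifted} observables by functions constant on elements of $\mathcal P_{2k}$ (Proposition \ref{aprox}), pushes to the quotient tower, and applies the convergence-to-equilibrium result (Theorem \ref{speed}) to the measure $\hat\lambda_k$ with density in $\mathcal F^+_\beta$. Your smoothing route could plausibly be turned into a standalone proof of \eqref{bound} in the H\'enon case, since there the H\"older decay of \cite{BY2} is available, but that would require verifying that the constant in \cite{BY2} depends on the observables only through their H\"older norms, with controlled (say linear) growth, which you do not carry out.
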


The following corollary is a direct application of the above theorem.

\begin{corollary}\label{hen}
Let $f=T_{a,b}$ and $(a, b)\in\mathcal A.$
\begin{itemize}
\item[(i)]If $\mathcal R_{\varphi, \psi}(\varepsilon)\lesssim
e^{\alpha\log\varepsilon}$ i.e. if the observables $\varphi, \psi$
are H\"older continuous with exponent $\alpha$ then $C_n(\varphi,
\psi; \mu)\lesssim e^{-{\alpha' n}},$$\alpha'=\alpha|\log\theta|.$
\item[(ii)] If\,\, $\mathcal R_{\varphi, \psi}(\varepsilon)\lesssim e^{-|\log
\varepsilon|^{\alpha}},$ $\alpha\in(0, 1)$  then $C_n(\varphi, \psi;
\mu)\lesssim e^{-n^\alpha}.$
\item[(iii)] If\,\, $\mathcal R_{\varphi, \psi}(\varepsilon)\lesssim |\log
\varepsilon|^{-\alpha},$ for $\alpha>0$  then $C_n(\varphi, \psi;
\mu)\lesssim {n^{-\alpha}}.$
\end{itemize}
\end{corollary}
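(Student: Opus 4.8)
The plan is to read off all three estimates directly from Theorem~\ref{main} by substituting $\varepsilon=\theta^n$ into each prescribed modulus of continuity and then comparing the two terms on the right-hand side of \eqref{bound}. Since the prefactor $2(\|\varphi\|_\infty+\|\psi\|_\infty)$ is a constant depending only on $\varphi$ and $\psi$, I would first absorb it into the implied constant, so that \eqref{bound} becomes
\[
C_n(\varphi,\psi;\mu)\lesssim \mathcal R_{\varphi,\psi}(\theta^n)+\theta^n .
\]
Everything then reduces to evaluating $\mathcal R_{\varphi,\psi}$ at the geometric scale $\theta^n$ and deciding which of the two summands decays more slowly.

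For (i) I would write $\theta^n=e^{n\log\theta}$ and use $\mathcal R_{\varphi,\psi}(\varepsilon)\lesssim e^{\alpha\log\varepsilon}$ to get $\mathcal R_{\varphi,\psi}(\theta^n)\lesssim e^{\alpha n\log\theta}=e^{-\alpha' n}$ with $\alpha'=\alpha|\log\theta|$; since a Hölder exponent satisfies $\alpha\le 1$ one has $\alpha'\le|\log\theta|$, so this term dominates $\theta^n=e^{-n|\log\theta|}$ and gives the claim. For (ii) and (iii) the key identity is $|\log\theta^n|=n|\log\theta|$. In (ii) this yields $\mathcal R_{\varphi,\psi}(\theta^n)\lesssim e^{-|\log\theta|^\alpha\, n^\alpha}$, a stretched exponential that for $\alpha\in(0,1)$ decays more slowly than $\theta^n$ and hence dominates, producing a rate of the stated form $e^{-cn^\alpha}$. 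In (iii) it yields $\mathcal R_{\varphi,\psi}(\theta^n)\lesssim (n|\log\theta|)^{-\alpha}\lesssim n^{-\alpha}$, which is only polynomially small and therefore dominates the exponential term $\theta^n$, giving $C_n\lesssim n^{-\alpha}$.

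There is no genuine obstacle here, since the statement is a direct corollary; the one point I would be careful about is the comparison of the two terms in each case, confirming that the modulus-of-continuity contribution is the slower-decaying summand, which is exactly what makes the final rate match $\mathcal R_{\varphi,\psi}(\theta^n)$ rather than $\theta^n$. I would also flag the mild abuse in (ii), where the constant $c=|\log\theta|^\alpha$ in the exponent cannot in general be normalized to $1$, so the conclusion is best read as decay of stretched-exponential type $e^{-cn^\alpha}$.
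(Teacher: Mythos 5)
Your proof is correct and is exactly what the paper intends: the paper gives no argument at all beyond calling the corollary ``a direct application of the above theorem,'' and your substitution $\varepsilon=\theta^n$ in \eqref{bound}, followed by the term-by-term comparison showing the modulus-of-continuity term dominates, is precisely that application. Your closing remark is also a legitimate correction: in case (ii) the constant $c=|\log\theta|^{\alpha}$ in the exponent genuinely cannot be absorbed into a multiplicative constant when $|\log\theta|<1$, so the stated bound $\lesssim e^{-n^{\alpha}}$ should indeed be read as stretched-exponential decay $e^{-cn^{\alpha}}$ for some $c>0$.
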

Notice that the upper in (i) is the same as in \cite{BY2}, which is
natural to expect. The estimates in (ii)-(iii) are new.
\subsection{Solenoid with intermittency}
The second class of maps we consider is a solenoid map with an
indifferent fixed point  \cite{AlvPin}, which is defined  as
follows. Let $M=\mathbb S^1\times D^2,$ where $D^2$ is a unit disk
in $\mathbb R^2.$ For $(x, y, z)\in M$ let
$$g(x, y, z)=\left(f(x), \frac{1}{10}y+\frac{1}{2}\cos x,
\frac{1}{10}z+\frac 1 2\sin x\right),$$
where $f:\mathbb S^1\to
\mathbb S^1$ is a map of degree $d\ge 2$ with the properties: $f$ is
$C^2$ on $\mathbb S^{1}\setminus \{0\};$ $f$ is $C^1$ on $\mathbb
S^1$ and $f'>1$ on $\mathbb S^1\setminus \{0\};$ $f(0)=0,$ $f'(0)=1$
and there exists $\gamma>0$ such that $-xf''\approx |x|^\gamma$ for
all $x\neq 0.$

\begin{theorem}\label{main1} Let $g$ be the map described above.
Assume that $\gamma<1.$ Then for any $\varphi,\psi \in \mathcal
C(M)$
 \be \label{bound1} C_n(\varphi, \psi;
\mu)\le 2(\|\varphi\|_\infty+\|\psi\|_\infty)\max\{\mathcal
R_\varphi(n^{-1/\gamma}), \mathcal
R_\psi(n^{-1/\gamma})\}+Cn^{1-1/\gamma}
 \ee
where constant $C$ depends on $\varphi$ and $\psi.$
\end{theorem}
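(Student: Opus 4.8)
The plan is to obtain Theorem \ref{main1} as a direct application of the Main Technical Theorem, so that the real content is to exhibit a GMY-structure for $g$ with sharp return-time control. First I would recall from \cite{AlvPin} that $g$ is hyperbolic, with the two $D^2$-fibre directions as uniformly contracting stable directions (rate $1/10$) and with local unstable leaves that are graphs over the base circle $\mathbb S^1$. A hyperbolic product set $\Lambda$ is then obtained by thickening a reference arc of $\mathbb S^1$, chosen away from the neutral fixed point $0$, with these contracting fibres; the return time $R$ of $g$ to $\Lambda$ coincides with the return time of the induced base map $f$, and the Markov and bounded-distortion properties are inherited from $f$ together with the uniform fibre contraction.

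The crux is the local analysis of $f$ at the neutral fixed point. The condition $f'(0)=1$ together with $-xf''\approx|x|^\gamma$ puts $f$ into Pomeau--Manneville form, $|f(x)-x|\approx|x|^{1+\gamma}$ near $0$. Comparing the escape recursion $x_{k+1}\approx x_k+c|x_k|^{1+\gamma}$ with the differential equation $\dot x=cx^{1+\gamma}$ shows that a point spending exactly $n$ iterates in a fixed neighbourhood of $0$ starts at distance $x_n\approx n^{-1/\gamma}$ from $0$. Two orbits that begin within $n^{-1/\gamma}$ of $0$ therefore remain essentially unseparated under the first $n$ iterates, so $n^{-1/\gamma}$ is exactly the scale at which the $n$-step dynamics can resolve observables, which is the scale appearing in \eqref{bound1}. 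Integrating the invariant density of $f$, which behaves like $|x|^{-\gamma}$ near $0$ and is integrable precisely because $\gamma<1$, over the un-escaped interval $(0,x_n)$ then gives the return-time tail $\mu(R>n)\lesssim n^{1-1/\gamma}$. I would either quote these estimates from \cite{AlvPin} or reprove them by the standard induction along the escape orbit.

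With the GMY-structure and the two estimates — resolution scale $n^{-1/\gamma}$ and tail $\mu(R>n)\lesssim n^{1-1/\gamma}$ — in hand, I would feed them into the Main Technical Theorem. The uniform fibre contraction smears $\varphi\circ g^n$ along stable leaves at an exponential rate, which is faster than the base dynamics separates points, so the slow polynomial scale $n^{-1/\gamma}$ dominates and fixes the argument of $\mathcal R_{\varphi,\psi}$; matching the additive error term of the technical theorem against the return-time tail produces $Cn^{1-1/\gamma}$, which is exactly \eqref{bound1}.

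The principal difficulty lies in the structural verification rather than in this final matching: one must check that $g$ satisfies all the GMY hypotheses of the Main Technical Theorem and, above all, pin down the intermittent asymptotics sharply — obtaining both the resolution scale $n^{-1/\gamma}$ and the measure $n^{1-1/\gamma}$ of the slow set with mutually consistent constants, and confirming that the exponential fibre contraction is compatible with the base tower so that the distortion and regularity assumptions hold uniformly over $\Lambda$. Most of this can be imported from \cite{AlvPin}, so the remaining task is largely to translate their construction into the precise input format required by the Main Technical Theorem.
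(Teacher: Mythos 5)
Your overall strategy --- exhibit the GMY-structure for $g$, import the intermittency asymptotics from \cite{AlvPin}, and feed them into the Main Technical Theorem --- is exactly the paper's route, but there is a genuine error in \emph{what} you feed in. The hypotheses (i)--(iii) of the Main Technical Theorem concern the tail $m\{R>n\}$ with respect to the \emph{reference} measure $m$ (Lebesgue on unstable leaves), not the invariant measure $\mu$. For the intermittent base map this Lebesgue tail is $m\{R>n\}\lesssim n^{-1/\gamma}$: the un-escaped set is an interval of length $\approx n^{-1/\gamma}$, and $m$ does not see the invariant density $|x|^{-\gamma}$. The term $n^{1-1/\gamma}$ in \eqref{bound1} is then produced by item (iii) of the Main Technical Theorem with $\alpha=1/\gamma$: the passage from the tail $n^{-\alpha}$ to the correlation bound $u_n\lesssim n^{1-\alpha}$ already costs one power of $n$ inside the tower machinery. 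You instead computed the tail with respect to $\mu$ (weighting by the density $|x|^{-\gamma}$, which indeed gives $n^{1-1/\gamma}$) and then identified the additive error $u_n$ with that tail directly. These are two mistakes that happen to cancel. If you actually ran the Main Technical Theorem on your input $\mu\{R>n\}\lesssim n^{-(1/\gamma-1)}$, item (iii) would require $1/\gamma-1>1$, i.e.\ $\gamma<1/2$, so the argument would not even apply for $\gamma\in[1/2,1)$, and where it did apply it would return only $u_n\lesssim n^{2-1/\gamma}$, strictly weaker than \eqref{bound1}.

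A second, smaller gap: the first term of the Main Technical Theorem is $\mathcal R_\varphi(\delta_n)$, where $\delta_n$ is the supremum of $\text{diam}(\pi(F^n(P)))$ over $P\in\mathcal P_{2n}$ --- a quantity involving the tower partition, the projection $\pi$, and both stable and unstable directions, not merely the separation rate of two orbits near the neutral fixed point. The paper controls it through the stopping-time partitions $\mathcal Q_k$ and the inequality \eqref{diameter}, $\text{diam}(\pi(F^k(P)))\le C\max\{\beta^k,\bar\delta_k\}$, together with the estimate $\bar\delta_k\lesssim k^{-1/\gamma}$ proved in \cite{AlvPin}; since $\beta^k$ decays exponentially, the polynomial scale dominates. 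Your heuristic (exponential fibre contraction is swamped by the polynomial base resolution) is the right intuition, and quoting \cite{AlvPin} is legitimate, but the object being estimated must be the tower quantity $\delta_n$, since that is what the modulus of continuity is evaluated at in the theorem you are invoking.
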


Direct application of the theorem for specific classes of
observables gives.

\begin{corollary}\label{sol}
Let $g$ be a map as in the theorem above. Then
\begin{itemize}
\item[(i)]If $\mathcal R_{\varphi, \psi}(\varepsilon)\lesssim
e^{\alpha\log\varepsilon}$ i.e. the observables $\varphi, \psi$ are
H\"older continuous with exponent $\alpha$ then $C_n(\varphi, \psi;
\mu)\lesssim e^{-\min\{\alpha/\gamma, 1/\gamma-1\}\log n},$

\item[(ii)] If\,\, $\mathcal R_{\varphi, \psi}(\varepsilon)\lesssim e^{-|\log
\varepsilon|^{\alpha}},$ $\alpha\in(0, 1)$  then $C_n(\varphi, \psi;
\mu)\lesssim e^{-(\log n)^\alpha}.$

\item[(iii)] If\,\, $\mathcal R_{\varphi, \psi}(\varepsilon)\lesssim |\log
\varepsilon|^{-\alpha},$ for $\alpha>0$  then $C_n(\varphi, \psi;
\mu)\lesssim |\log n|^{-\alpha}.$
\end{itemize}
\end{corollary}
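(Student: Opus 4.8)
The plan is to read off each estimate directly from the bound \eqref{bound1} of Theorem \ref{main1} by inserting the prescribed form of the modulus of continuity and then comparing the two terms that appear. Recall first that $\max\{\mathcal R_\varphi(n^{-1/\gamma}),\mathcal R_\psi(n^{-1/\gamma})\}=\mathcal R_{\varphi,\psi}(n^{-1/\gamma})$, and that since $\gamma<1$ we have $1-1/\gamma<0$, so the second term $Cn^{1-1/\gamma}$ already decays at a fixed negative power of $n$. Thus the entire content of the corollary is the elementary matter of deciding, in each case, whether this polynomial term or the modulus term $\mathcal R_{\varphi,\psi}(n^{-1/\gamma})$ governs the sum. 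I would set $\varepsilon=n^{-1/\gamma}$ once and for all, so that $|\log\varepsilon|=\gamma^{-1}\log n$, and carry this substitution through all three hypotheses.

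For case (i) the H\"older hypothesis gives $\mathcal R_{\varphi,\psi}(n^{-1/\gamma})\lesssim(n^{-1/\gamma})^\alpha=e^{-(\alpha/\gamma)\log n}$, while the remaining term is $Cn^{1-1/\gamma}=Ce^{-(1/\gamma-1)\log n}$. Since a sum $e^{-a\log n}+e^{-b\log n}$ with $a,b>0$ is bounded by $2\,e^{-\min\{a,b\}\log n}$, the two contributions combine into $\lesssim e^{-\min\{\alpha/\gamma,\,1/\gamma-1\}\log n}$, which is precisely the asserted rate.

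For cases (ii) and (iii) I would show that the modulus term is the slower-decaying of the two and hence controls the sum. Substituting $\varepsilon=n^{-1/\gamma}$ into the hypothesis of (ii) yields $\mathcal R_{\varphi,\psi}(n^{-1/\gamma})\lesssim e^{-(\gamma^{-1}\log n)^\alpha}=e^{-\gamma^{-\alpha}(\log n)^\alpha}\le e^{-(\log n)^\alpha}$, where the last inequality uses $\gamma<1$; and for (iii) it yields $\mathcal R_{\varphi,\psi}(n^{-1/\gamma})\lesssim(\gamma^{-1}\log n)^{-\alpha}\lesssim|\log n|^{-\alpha}$, the constant $\gamma^{\alpha}$ being absorbed into $\lesssim$. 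In both cases the polynomial term $n^{1-1/\gamma}=e^{-(1/\gamma-1)\log n}$ decays strictly faster, because $(1/\gamma-1)\log n$ grows linearly in $\log n$ whereas the exponents $(\log n)^\alpha$ (with $\alpha<1$) and $\alpha\log\log n$ governing the modulus terms grow only sublinearly in $\log n$; hence for large $n$ the polynomial term is dominated and the stated bounds follow.

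The computation is wholly routine, and I expect no real obstacle. The only point deserving a moment's care is the rate comparison in each case, i.e. deciding which of the two summands in \eqref{bound1} dominates, and in particular the elementary but essential observation that a genuine polynomial decay $n^{1-1/\gamma}$ always beats the stretched-exponential rate of (ii) and the logarithmic rate of (iii). Everything reduces to comparing powers of $\log n$ once the substitution $\varepsilon=n^{-1/\gamma}$ has been made.
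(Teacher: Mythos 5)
Your proposal is correct and follows exactly the route the paper intends: the paper gives no separate argument for Corollary \ref{sol}, stating only that it is a direct application of Theorem \ref{main1}, and your substitution $\varepsilon=n^{-1/\gamma}$ into \eqref{bound1} together with the term-by-term comparison (including absorbing the constants $\gamma^{-\alpha}$, $\gamma^{\alpha}$ and noting that the polynomial term $n^{1-1/\gamma}$ dominates only in case (i)) is precisely that intended computation, carried out in full.
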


The estimate in the first item coincides with the one given in
\cite{AlvPin} the remaining two are new.

\subsection{Young Towers} We now define GMY-structure
\cite{AlvPin}  which generalizes the definition of \cite{Y1} (see
Remarks 2.3-2.5 in \cite{AlvPin}). Let $f:M\to M$ be a
diffeomorphism of Riemannian manifold $M.$ If $\gamma\subset M$ is a
submanifold, then $m_\gamma$ denotes the restriction of the
Riemannian volume to $\gamma.$ Assume that $f$ satisfies the
following conditions.

\begin{itemize}
\item[(A1)] There exists $\Lambda\subset M$ with hyperbolic product
structure, i.e. there are continuous families of stable and unstable
manifolds $\Gamma^s=\{\gamma^s\}$ and $\Gamma^u=\{\gamma^u\}$ such
that $\Lambda=(\cup\gamma^s)\cap(\cup\gamma^u);$
dim$\gamma^s+\text{dim}\gamma^u=\text{dim}M;$ each $\gamma^s$ meets
each $\gamma^u$ at a unique point; stable and unstable manifolds are
transversal with angles bounded away from $0;$
$m_{\gamma}(\gamma\cap \Lambda)>0$ for any $\gamma\in\Gamma^u.$
\end{itemize}
Let $\Gamma^s$ and $\Gamma^u$ be the defining families of $\Lambda.$
A subset $\Lambda_0\subset\Lambda$ is called $s$-subset if
$\Lambda_0$ also has a hyperbolic structure and its defining
families can be chosen as $\Gamma^u$ and
$\Gamma^s_0\subset\Gamma^s.$ Similarly, we define $u$-subsets. For
$x\in\Lambda$ let $\gamma^\sigma(x)$ denote the element of
$\Gamma^\sigma$ containing $x,$ where $\sigma\in \{u, s\}.$

\begin{itemize}
\item[(A2)] There are pairwise disjoint $s$-subsets $\Lambda_1, \Lambda_2,
..., \subset\Lambda$ such that \\$m_{\gamma^u}((\Lambda\setminus\cup
\Lambda_i)\cap\gamma^u)=0$ on each $\gamma^u,$ and for each
$\Lambda_i,$ $i\in\mathbb{N}$ there is $R_i$ such that
$f^{R_i}(\Lambda_i)$ is $u$-subset;
$f^{R_i}(\gamma^s(x))\subset\gamma^s(f^{R_i}(x))$ and
$f^{R_i}(\gamma^u(x))\supset\gamma^u(f^{R_i}(x))$ for any
$x\in\Lambda_i.$ The separation time $s(x, y)$ is the smallest $k$
where $(f^R)^k(x)$ and $(f^R)^k(y)$ lie in different partition
elements.

\item[(A3)] There exist constants $C> 0$ and $\beta\in(0, 1)$ such that
 \\$\text{dist}(f^n(x), f^n(y))\le C\beta^n,$ for all
$y\in\gamma^s(x)$ and $n\ge 0.$
\end{itemize}

Let $f^u$ denote the restriction of $f$ onto unstable disks, and
$Df^u$ its differential.

\begin{itemize}
\item[(A4)] Regularity of the stable foliation:
given $\gamma, \gamma'\in \Gamma^u$ define $\Theta:\gamma'\cap
\Lambda\to \gamma\cap \Lambda$ by $\Theta(x)=\gamma^s(x)\cap\gamma.$
Then
\begin{itemize}
\item[(a)] $\Theta$ is absolutely continuous and
$$u(x):=\frac{d(\Theta_\ast m_{\gamma'})}{dm_\gamma}(x)=\prod_{i=0}^{\infty}
\frac{\det Df^u(f^i(x))}{\det Df^u(f^i(\Theta^{-1}(x)))};$$
\item[(b)] We assume that there exists $C>0$ and $\beta\in(0, 1)$
such that
$$\log\frac{u(x)}{u(y)}\le C\beta^{s(x, y)}\quad \text{for}\quad x, y\in\gamma'\cap \Lambda.$$
\end{itemize}

\item[(A5)] Bounded distortion: for $\gamma\in\Gamma^u$ and $x, y\in \Lambda\cap\gamma$
$$\log\frac{\det D(f^R)^u(x)}{\det D(f^R)^u(y)}\le C\beta^{s(f^R(x), f^R(y))}.$$

\item[(A6)] Integrability: $\int R\,dm_0<\infty.$
\item[(A7)] Aperiodicity: gcd$\{R_i\}=1.$
\end{itemize}

The geometric structure described in (A1) and (A2) allows us to
define the corresponding \emph{Young Tower}. More precisely, we let
\be\label{tower1}
 \mathcal{T}=\{(z,\ell)\in\Lambda\times\mathbb{Z}^+_0|\,\, R(z)>\ell\},
 \ee
where $\mathbb{Z}_0^+$ denotes the set of all nonnegative integers.
For $\ell\in\Z_0^+$ the subset $\mathcal
T_\ell=\{(\cdot,\ell)\in\mathcal T\}$ of $\mathcal T$ is called its
{\it$\ell$th level} (identify $\Lambda$ with $\mathcal T_0$ and
$\Lambda_i$ with $\mathcal T_{0, i}$). The sets $\mathcal
T_{\ell,i}:=\{(z,\ell)\in\mathcal T_\ell|\,\,(z, 0)\in\mathcal
T_{0,i}\}$  give a partition $\mathcal P$ of $\mathcal T.$ We can
define a map \( F: \mathcal T \to \mathcal T \) letting
\begin{equation}\label{towerm}
F(z,\ell)=\Big\{\begin{array}{cc}
(z,\ell+1) &\text{if} \,\, \ell+1<R(z),\\
(f^{R(z)}(z), 0)&  \text{if}\,\, \ell+1=R(z).
\end{array}
\end{equation}
A measure  \(  m  \) on \(  \mathcal T\) is defined as
follows. Let $\mathcal{A}$ be a $\sigma$-algebra on $\Lambda
\subseteq M,$ and let $m_\Lambda$ denote a finite measure on  $\Lambda.$  For  any $\ell\geq 0$ and $A\subset\mathcal
T_\ell$ such that
 $F^{-\ell}(A)\in\mathcal{A}$ define
$m(A)=m_\Lambda(F^{-\ell}(A)).$ There is  tower projection
$\pi:\mathcal T\to M,$ which is a semi-conjugacy $f\circ\pi=\pi\circ
F,$ defined as follows: for $x\in\mathcal T$ with $x_0\in\Lambda$
and $F^\ell(x_0)=x$ we let $\pi(x, \ell)=f^\ell(x_0).$

Notice that it is not strictly necessary for \(  f  \) to be a
diffeomorphism. Such a structure can be defined for example also in
the presence of some discontinuities as long as the stable and
unstable manifolds exist and satisfy the required properties, this
has been done for example for a class of Billiards in \cite{Chern}.

It is known from \cite{Y1} that under the above assumptions $f$
admits an SRB measure $\mu$ and here we study further properties of
the measure $\mu.$ Now, for $n\ge 0$ introduce a sequence of
partitions as follows:
$$\mathcal P_0=\mathcal P\quad \text{and} \quad \mathcal
P_n=\bigvee_{i=0}^{n-1}F^{-i}\mathcal P.$$

Let
$$\delta_n=\sup\{\text{diam}\pi(F^n(P)):P\in \mathcal P_{2n}\}.$$

Now we state the main technical theorem which is of independent
interest.

\begin{mtthm} Let $f:M\to M$ be a
diffeomorphism of Riemannian manifold $M,$ which admits a Young
tower. Then for any non-zero observables $\varphi,\psi\in\mathcal
C(M)$ we have
\be \label{corrl}
C_n(\varphi, \psi; \mu)\le
2(\|\varphi\|_\infty+\|\psi\|_\infty)\max\{\mathcal
R_\varphi(\delta_n), \mathcal R_\psi(\delta_n)\}+u_n\ee
 where $u_n$ is
a  sequence  of positive numbers defined as follows:
\begin{itemize}
\item[(i)] If $m\{R>n\}\le C\theta^n$ for some  $C>0$ and
$\theta\in(0, 1),$  then there exist $\theta'\in (0, 1)$ and $C'>0$
such that $u_n\le C'{\theta'}^n.$
\item[(ii)] If $m\{R>n\}\le C e^{-cn^\eta},$ for some $C,c>0$ and
$\eta\in (0, 1),$ then there are $C',c'>0$ such that $u_n\le
C'e^{-c'n^\eta}.$
\item[(iii)] If $m\{R>n\}\le Cn^{-\alpha}$ for some
$C>0$ and $\alpha>1$  then  there exists $C'>0$ such that $u_n\le
C'n^{1-\alpha}.$
\end{itemize}
\end{mtthm}

\section{Proof of  main technical theorem}
In this section we reduce the system to a non-invertible system, as
in \cite{Y1}. We start by defining a special measure on $\Lambda.$

\subsection{The natural measures on the  unstable manifolds}
We fix  $\hat\gamma\in\Gamma^u.$ For any $\gamma\in\Gamma^u$ and
$x\in\gamma\cap\Lambda$ let $\hat x$ be the point
$\gamma^s(x)\cap\gamma.$ Define for $x\in\gamma\cap\Lambda$
$$
\hat u(x)=\prod_{i=0}^{\infty}\frac{\det Df^u(f^i(x))}{\det
Df^u(f^i(\hat x))}.
$$
By item (b) of assumption (A4) $\hat u$ satisfies the bounded
distortion property. For each $\gamma\in\Gamma^u$ define the measure
$m_\gamma$ as
$$
\frac{dm_\gamma}{d\text{Leb}_\gamma}=\hat
u\mathbf{1}_{\gamma\cap\Lambda},
$$
where $\mathbf{1}_{\gamma\cap\Lambda}$ is the characteristic
function of $\gamma\cap\Lambda.$ The proof of the following lemma is
fairly standard and can be found in \cite{AlvPin}.

\begin{lemma}\label{mgamma} \begin{itemize}
\item[(i)] Let $\Theta$ be the map defined in $(A4).$
Then $\Theta_\ast m_\gamma=m_{\gamma'}$ for any
$\gamma,\gamma'\in\Gamma^u.$

\item[(ii)] Let $\gamma, \gamma'\in\Gamma^u$  be such that
$f^R(\gamma\cap\Lambda)\subset\gamma',$ and let $Jf^R(x)$ denote the
Jacobian of $f^R$ with respect to the measures $m_\gamma$ and
$m_{\gamma'}.$ Then $Jf^R(x)$ is constant on the stable manifolds
and there is $C>0$ such that for every $x,y \in\gamma\cap\Lambda$
$$\left|\frac{Jf^R(x)}{Jf^R(y)}-1\right|\le C\beta^{s(f^R(x),
f^R(y))}.$$
\end{itemize}
\end{lemma}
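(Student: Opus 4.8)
The plan is to prove both statements by a direct Jacobian computation whose only real mechanism is the cancellation of the ``reference factors'' built into $\hat u$ whenever two points share a stable leaf. To keep the bookkeeping clean I would abbreviate $J_i(z):=\det Df^u(f^i(z))$, so that $\hat u(x)=\prod_{i\ge0}J_i(x)/J_i(\hat x)$ with $\hat x=\gamma^s(x)\cap\hat\gamma$ (a quantity depending only on the stable leaf of $x$, hence $\hat u$ is a single function on $\Lambda$), and $\det D(f^R)^u(x)=\prod_{i=0}^{R-1}J_i(x)$ by the chain rule. A principle I would enforce throughout is never to split an infinite product into two separately divergent halves: every product that appears pairs $f^i(x)$ with the $i$th iterate of a point lying on the \emph{same} stable leaf, so by (A3) the individual factors tend to $1$ geometrically and all products converge.

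For part (i) I would use the holonomy $\Theta(x)=\gamma^s(x)\cap\gamma'$ (the direction is immaterial, so I take $\Theta\colon\gamma\cap\Lambda\to\gamma'\cap\Lambda$). Its Jacobian with respect to the Riemannian volumes is $\prod_{i\ge0}J_i(x)/J_i(\Theta(x))$, which is the content of (A4)(a). The key observation is that if $x$ and $y=\Theta(x)$ lie on one stable leaf then $\hat x=\hat y$, so in the ratio $\hat u(x)/\hat u(y)=\prod_{i\ge0}\bigl(J_i(x)/J_i(\hat x)\bigr)\bigl(J_i(\hat y)/J_i(y)\bigr)$ the reference factors cancel and the ratio equals precisely the holonomy Jacobian. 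A one-line change of variables, $m_{\gamma'}(A)=\int_A\hat u\,d\mathrm{Leb}_{\gamma'}=\int_{\Theta^{-1}A}\hat u(\Theta x)\,\mathrm{Jac}\,\Theta(x)\,d\mathrm{Leb}_\gamma=\int_{\Theta^{-1}A}\hat u\,d\mathrm{Leb}_\gamma=m_\gamma(\Theta^{-1}A)$, then gives $\Theta_\ast m_\gamma=m_{\gamma'}$.

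For part (ii) the standard change of variables gives $Jf^R(x)=\det D(f^R)^u(x)\cdot\hat u(f^R x)/\hat u(x)$. To show this is constant on stable manifolds I would take $\tilde x$ on the same stable leaf as $x$; since $f^R$ carries stable leaves into stable leaves by (A2), one has $\hat x=\hat{\tilde x}$ and $\widehat{f^R x}=\widehat{f^R\tilde x}$. Substituting the product formulas and cancelling these common reference points, the two $\hat u$-differences become $\log\hat u(x)-\log\hat u(\tilde x)=\sum_{i\ge0}\bigl(\log J_i(x)-\log J_i(\tilde x)\bigr)$ and $\log\hat u(f^Rx)-\log\hat u(f^R\tilde x)=\sum_{i\ge R}\bigl(\log J_i(x)-\log J_i(\tilde x)\bigr)$, so that $\log\bigl(Jf^R(x)/Jf^R(\tilde x)\bigr)$ collapses to $\sum_{i=0}^{R-1}+\sum_{i\ge R}-\sum_{i\ge0}$ of the single summand $\log J_i(x)-\log J_i(\tilde x)$, which is identically $0$. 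All three sums converge because $x,\tilde x$ share a stable leaf, so $Jf^R$ descends to a function of the stable leaf.

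Finally, for the distortion inequality, for $x,y\in\gamma\cap\Lambda$ I would split $\log\dfrac{Jf^R(x)}{Jf^R(y)}=\log\dfrac{\det D(f^R)^u(x)}{\det D(f^R)^u(y)}+\log\dfrac{\hat u(f^R x)}{\hat u(f^R y)}-\log\dfrac{\hat u(x)}{\hat u(y)}$, bound the first term by (A5) and the last two by the bounded distortion of $\hat u$ coming from (A4)(b) (two-sidedness follows by swapping $x,y$), and convert every exponent using $s(x,y)=s(f^R x,f^R y)+1$ together with $\beta\in(0,1)$. Since $|e^t-1|\lesssim|t|$ for bounded $t$, this yields $\bigl|Jf^R(x)/Jf^R(y)-1\bigr|\le C\beta^{s(f^R x,f^R y)}$. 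I expect the only delicate point to be exactly this product bookkeeping — keeping every telescoping sum convergent and tracking the one-step shift in separation time — rather than any conceptual difficulty, which is consistent with the lemma being standard.
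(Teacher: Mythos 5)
Your proof is correct and is essentially the standard argument: the paper itself gives no proof of Lemma \ref{mgamma}, deferring to \cite{AlvPin} (see also \cite{Y1}), and the proof there proceeds exactly as you do --- cancellation of the reference factors of $\hat u$ for points on a common stable leaf, the identity $Jf^R(x)=\det D(f^R)^u(x)\,\hat u(f^Rx)/\hat u(x)$ with the telescoping $\sum_{i=0}^{R-1}+\sum_{i\ge R}-\sum_{i\ge 0}=0$, and the distortion bound combining (A5) with (A4)(b) via $s(x,y)=s(f^Rx,f^Ry)+1$. The only point you wave at (``the direction is immaterial'') does deserve one line of care: with your convention $\mathrm{Jac}\,\Theta(x)=\prod_i J_i(x)/J_i(\Theta x)$, this is precisely (A4)(a) applied to $\Theta^{-1}$, so the cancellation $\hat u(\Theta x)\,\mathrm{Jac}\,\Theta(x)=\hat u(x)$ goes through exactly as you claim.
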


\subsection{Quotient tower} Let $\bar\Lambda=\Lambda/\sim,$ where
$ x \sim y$ if and only if $y\in\gamma^s(x).$ This equivalence
relation gives rise to a \emph{quotient tower}
$$
\Delta=\mathcal T/\sim
$$
 with $\Delta_\ell=\mathcal T_\ell/\sim$ and
its partition into $\Delta_{0, i}=\mathcal T_{0,i}/\sim$ which we
denote by $\bar{\mathcal P}.$ There is a natural projection $\bar
\Pi:\mathcal T\to \Delta.$ Since $f^R$ preserves stable leaves and
$R$ is constant on them the return time $\bar R$ and separation time
$\bar s$ are well defined by $R$ and $s.$ Moreover, we can define a
tower map $\bar F:\Delta\to \Delta.$ Let $m$ be a measure whose
restriction onto unstable manifolds is $m_\gamma.$  Lemma
\ref{mgamma} implies that there is a measure $\bar m$ on $\Delta$
whose restriction to each $\gamma\in \Gamma^u$ is $m_\gamma.$ We let
$J\bar F$ denote the Jacobian of $\bar F$ with respect to $\bar m.$

Next we introduce the space of H\"older continuous functions on
$\Delta$  as
$$
\mathcal F_\beta=\{\varphi:\Delta\to \mathbb{R}: \exists C_\varphi
\,\, \text{such that}\,\, |\varphi(x)-\varphi(y)|\le
C_\varphi\beta^{\bar s(x, y)} \,\, \forall x,y\in\Delta \}.
$$

\begin{align*}
\mathcal F^+_\beta=\big\{\varphi\in\mathcal F: \exists C_\varphi\,\,
\text{such that on each} \,\,\Delta_{\ell, i}, \,\,
\text{either}\,\, \varphi\equiv 0,\,\, \text{or}\\
\varphi>0, \left|\frac{\varphi(x)}{\varphi(y)}-1\right|\le C_\varphi
 \beta^{\bar s(x, y)} \,\,\forall x,y\in \Delta_{\ell,i} \big\}.
\end{align*}

The mixing properties of  $\bar F$ was studied firstly in \cite{Y2}.
Several papers appeared improving or extending the results given in
\cite{Y2}, for example \cite{Gou, Hol, Ly}. Here we combine the
results from \cite{AlvPin} and \cite{Y2}.

\begin{theorem} \cite{AlvPin,Y2}\label{speed}
\begin{itemize}
\item[(i)] $\bar F$ admits unique mixing acip $\bar \nu;$
 $d\bar\nu/d\bar m\in\mathcal F^+$ and $d\bar\nu/d\bar m>c>0.$
 \item[(ii)] Let $\lambda$
be a probability  measure with $\varphi=d\lambda/d\bar m\in \mathcal
F^+_\beta.$
\begin{enumerate}
\item If $\bar m\{\bar R>n\}\le C\theta^n$ for some $C>0$ and
$\theta\in(0, 1)$ then there exists $C'>0$ and $\theta'\in(0, 1)$
such that $|\bar F_\ast^n-\nu|\le C'\theta'^n.$
\item If $\bar m\{\bar R>n\}\le Ce^{-cn^\eta}$ for some $C, c>0$ and
$\eta\in(0, 1]$ then there exists $C',c'>0$ such that $|\bar
F_\ast^n-\nu|\le C'e^{-c'n^\eta}.$ Moreover $c'$ does not depend on
$\varphi,$ $C'$ depends only on $C_\varphi.$
\item If $\bar
m\{\bar R>n\}\le Cn^{-\alpha}$ for some $C>0$ and $\alpha>1$ then
there exists $C'>0$ such that$ |\bar F_\ast^n-\nu|\le
C'n^{1-\alpha}.$
\end{enumerate}
\end{itemize}
\end{theorem}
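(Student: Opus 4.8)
The plan is to exploit the Gibbs--Markov structure of the return map at the base of $\Delta$ and then run a coupling argument, reading off the three regimes from the tail of $\bar R$. Throughout, $\nu=\bar\nu$ and $|\cdot|$ denotes the total variation of a signed measure.

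First I would prove (i). The return map $\bar F^{\bar R}\colon\bar\Lambda\to\bar\Lambda$ has full branches $\Delta_{0,i}\to\bar\Lambda$ and, by Lemma~\ref{mgamma}(ii) together with (A5), uniformly bounded distortion with respect to $\bar m$; since it is also expanding in the symbolic metric $\beta^{\bar s}$, it is a Gibbs--Markov map. Standard Gibbs--Markov theory then gives a unique acip for $\bar F^{\bar R}$ with density in $\mathcal F^{+}_\beta$, bounded above and below. Suspending over the roof $\bar R$, which has finite mean by (A6), produces an $\bar F$-invariant probability $\bar\nu$ on $\Delta$ with $d\bar\nu/d\bar m\in\mathcal F^{+}_\beta$ bounded below, and aperiodicity (A7) promotes ergodicity to mixing.

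For (ii) I would use the coupling method. Fix $\lambda$ with $\varphi=d\lambda/d\bar m\in\mathcal F^{+}_\beta$ and couple the two evolutions of $\lambda$ and $\bar\nu$ on $\Delta\times\Delta$. The mechanism is that at each return to the base the two pushed densities, restricted to a base element, are both comparable to $\bar m$ there by bounded distortion, so a definite proportion of mass can be matched; once matched, it evolves identically and contributes equally to $\bar F^{n}_\ast\lambda$ and to $\bar\nu$. Writing $T$ for the coupling time, this yields $|\bar F^{n}_\ast\lambda-\bar\nu|\le 2\,\mu_c\{T>n\}$ for the coupling measure $\mu_c$. Crucially, the matched proportion at each attempt can be taken to depend only on the distortion constants, so that the resulting rate is independent of $\varphi$, with only the initial comparison (hence the prefactor) seeing $C_\varphi$; this is what gives the uniformity asserted in case~(2).

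It then remains to estimate $\mu_c\{T>n\}$, which a renewal argument ties to the return-time tail: matching is attempted only at successive returns to the base, whose gaps are distributed like $\bar R$, so $\mu_c\{T>n\}$ is dominated by a renewal sum in $\bar m\{\bar R>k\}$. An exponential tail then gives an exponentially small uncoupled mass (case~1); a stretched-exponential tail $e^{-cn^{\eta}}$ propagates to $e^{-c'n^{\eta}}$ (case~2); and a polynomial tail $n^{-\alpha}$ gives, after summing $\sum_{k>n}k^{-\alpha}\sim n^{1-\alpha}$, the rate $n^{1-\alpha}$ of case~(3). The hard part will be case~(3): with no spectral gap available, one must control the coupling-time tail purely through the renewal structure and, in particular, bound the contribution of atoms with very long returns, which is exactly where one power of $n$ is lost; the exponential and stretched-exponential cases are comparatively soft once the coupling is set up, the only care there being to keep the matched proportion uniform so that $c'$ does not depend on $\varphi$.
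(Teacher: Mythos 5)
The paper contains no proof of Theorem~\ref{speed}: it is imported directly from \cite{Y2} and \cite{AlvPin}, where it is established by exactly the scheme you outline --- the Gibbs--Markov structure of the return map (via Lemma~\ref{mgamma} and (A5)--(A7)) for existence and mixing of $\bar\nu$, followed by a coupling at returns to the base whose tail is controlled by a renewal estimate on $\bar m\{\bar R>n\}$, the polynomial case being the delicate one. Your sketch thus reproduces the approach of the cited sources, including the key uniformity point that the matched fraction per attempt depends only on distortion constants, so that $c'$ is independent of $\varphi$ and only $C'$ sees $C_\varphi$.
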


\subsection{Approximation of correlations}
We establish the relation between the original problem and problem
of estimating the decay rates of correlations on the quotient tower,
and then we apply theorem \ref{speed}.

Let $\pi:\mathcal T\to M,$ $\bar\pi: \mathcal T\to \Delta$ be the
tower projections, then we have  $\bar\nu=\bar\pi_\ast\nu$ and
$\mu=\pi_\ast\nu.$ Given $\varphi, \psi\in \mathcal C^0(M)$ define
$\tilde\varphi=\varphi\circ\pi$ and $\tilde\psi=\psi\circ\pi.$ By
definition
$$\int(\varphi\circ f^n)\psi d\mu-\int\varphi d\mu\int\psi d\mu=
\int(\tilde\varphi\circ F^n)\tilde\psi d\nu-\int\tilde\varphi
d\nu\int\tilde\psi d\nu,$$ which shows it is sufficient to obtain
estimates for the lifted observables. This will be done by
approximating the lifted observables with piecewise constant
observables on tower. For $k\le n/4$ define $\bar\varphi_k$ as
follows
$$
\bar\varphi_k|_P=\inf\{\tilde\varphi\circ F^k(x)| \,\, x\in P\},
\,\,\text{where} \,\, P\in\mathcal P_{2k}.
$$
Define $\bar\psi_k$ in a similar way, and note that $\bar\varphi_k$
and $\bar\psi_k$ are constant on the stable leaves. Hence, we can
consider them as a function defined on quotient tower $\Delta.$ The
main result of this section is the following
\begin{proposition}\label{aprox}
$$|\mathcal C_n(\tilde\varphi, \tilde\psi; \nu)-
\mathcal C_n(\bar\varphi_k, \bar\psi_k;\bar\nu)|\le
2(\|\varphi\|_\infty+\|\psi\|_\infty)\max\{\mathcal
R_\varphi(\delta_k), \mathcal R_\psi(\delta_k)\}.$$
\end{proposition}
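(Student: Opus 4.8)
The plan is to compare the two correlation functionals term by term, estimating the error introduced by replacing the lifted observables $\tilde\varphi,\tilde\psi$ by their piecewise-constant approximants $\bar\varphi_k,\bar\psi_k$. Writing
\[
\mathcal C_n(\tilde\varphi,\tilde\psi;\nu)=\left|\int(\tilde\varphi\circ F^n)\tilde\psi\,d\nu-\int\tilde\varphi\,d\nu\int\tilde\psi\,d\nu\right|,
\]
and noting that $\bar\varphi_k,\bar\psi_k$ are constant on stable leaves so they descend to $\Delta$ and $\mathcal C_n(\bar\varphi_k,\bar\psi_k;\bar\nu)$ makes sense, I would invoke the reverse triangle inequality $|\,|a|-|b|\,|\le|a-b|$ to reduce the whole estimate to bounding the difference of the bilinear expressions inside the absolute values. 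That difference splits into a ``correlated'' piece coming from $\int(\tilde\varphi\circ F^n)\tilde\psi\,d\nu-\int(\bar\varphi_k\circ F^n)\bar\psi_k\,d\bar\nu$ and a ``product'' piece coming from the two single integrals; since $\bar\nu=\bar\pi_\ast\nu$ and the $\bar\varphi_k$ are $\bar\pi$-measurable, each integral against $\bar\nu$ equals the corresponding integral against $\nu$ of the lifted version, so I can work entirely on $\mathcal T$ with the single measure $\nu$.

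The heart of the matter is the pointwise estimate
\[
\|\tilde\varphi\circ F^k-\bar\varphi_k\|_\infty\le \mathcal R_\varphi(\delta_k),
\]
and similarly for $\psi$. Here I would use the definition $\bar\varphi_k|_P=\inf\{\tilde\varphi\circ F^k(x):x\in P\}$ for $P\in\mathcal P_{2k}$: for any two points $x,y$ in the same $P\in\mathcal P_{2k}$, their images $\pi(F^k x),\pi(F^k y)$ lie in the set $\pi(F^k P)$ whose diameter is at most $\delta_k=\sup\{\operatorname{diam}\pi(F^k(P)):P\in\mathcal P_{2k}\}$ by definition of $\delta_k$. Hence $|\tilde\varphi(F^k x)-\tilde\varphi(F^k y)|=|\varphi(\pi F^k x)-\varphi(\pi F^k y)|\le\mathcal R_\varphi(\delta_k)$, and taking the infimum over $y\in P$ gives the supremum bound on each element of the partition, which then holds globally.

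With the $L^\infty$-approximation in hand, I would bound each of the four resulting cross terms by a standard ``$ab-a'b'=(a-a')b+a'(b-b')$'' splitting, using that $\|\bar\varphi_k\|_\infty\le\|\varphi\|_\infty$ and $\|\tilde\psi\|_\infty\le\|\psi\|_\infty$ together with $\int d\nu=1$. Care must be taken that the comparison is between $\mathcal C_n(\tilde\varphi,\tilde\psi;\nu)$ and $\mathcal C_n(\bar\varphi_k,\bar\psi_k;\bar\nu)$ rather than with observables at the same time index, so I expect the main obstacle to be the bookkeeping of the time shift: the approximant $\bar\varphi_k$ is built from $\tilde\varphi\circ F^k$, so one must align the $F^n$ in the correlation with this built-in $F^k$ shift and check that applying $F^{n}$ to the $k$-th approximant still compares correctly to $\tilde\varphi\circ F^n$ via the $F^{n-k}$-invariance of the measure and the containment of partitions $\mathcal P_{2k}\subset\mathcal P_{2n}$ for $k\le n/4$. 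Collecting the four terms, each contributing at most $\|\psi\|_\infty\mathcal R_\varphi(\delta_k)$ or $\|\varphi\|_\infty\mathcal R_\psi(\delta_k)$, and bounding $\mathcal R_\varphi(\delta_k),\mathcal R_\psi(\delta_k)$ by their maximum yields the factor $2(\|\varphi\|_\infty+\|\psi\|_\infty)\max\{\mathcal R_\varphi(\delta_k),\mathcal R_\psi(\delta_k)\}$ claimed in the proposition.
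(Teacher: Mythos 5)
Your argument is correct, but it follows a genuinely different decomposition from the paper's. The paper replaces the observables one at a time and asymmetrically: first $\tilde\varphi$ is exchanged for $\bar\varphi_k$ at the cost of lowering the time index, $|C_n(\tilde\varphi,\tilde\psi;\nu)-C_{n-k}(\bar\varphi_k,\tilde\psi;\nu)|\le 2\|\psi\|_\infty\mathcal R_\varphi(\delta_k)$; then, because $\bar\psi_k$ approximates $\tilde\psi\circ F^k$ and not $\tilde\psi$, the second observable cannot be swapped pointwise, so the paper introduces the pushforward density $\tilde\psi_k=dF^k_\ast(\bar\psi_k\nu)/d\nu$, compares $C_{n-k}(\bar\varphi_k,\tilde\psi;\nu)$ with $C_{n-k}(\bar\varphi_k,\tilde\psi_k;\nu)$ through a variational-norm estimate, and finally quotes from \cite{AlvPin} the identity $C_{n-k}(\bar\varphi_k,\tilde\psi_k;\nu)=C_n(\bar\varphi_k,\bar\psi_k;\bar\nu)$. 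You instead replace both observables simultaneously, and the time-shift ``bookkeeping'' you flag as the main obstacle resolves exactly as you anticipate, by the $F$-invariance of $\nu$:
\begin{equation*}
C_n(\tilde\varphi,\tilde\psi;\nu)=C_n(\tilde\varphi\circ F^k,\tilde\psi\circ F^k;\nu),
\end{equation*}
since $\int(\tilde\varphi\circ F^{n+k})(\tilde\psi\circ F^k)\,d\nu=\int\bigl((\tilde\varphi\circ F^n)\,\tilde\psi\bigr)\circ F^k\,d\nu=\int(\tilde\varphi\circ F^n)\tilde\psi\,d\nu$, and similarly for the marginal integrals. With this identity in hand, your four-term splitting against $C_n(\bar\varphi_k,\bar\psi_k;\nu)$, using $\|\tilde\varphi\circ F^k-\bar\varphi_k\|_\infty\le\mathcal R_\varphi(\delta_k)$ and $\|\tilde\psi\circ F^k-\bar\psi_k\|_\infty\le\mathcal R_\psi(\delta_k)$, gives $2\|\psi\|_\infty\mathcal R_\varphi(\delta_k)+2\|\varphi\|_\infty\mathcal R_\psi(\delta_k)$, which is bounded by the claimed quantity; and your final step, trading $\nu$ for $\bar\nu$, is legitimate because $\bar\varphi_k,\bar\psi_k$ are constant on elements of $\mathcal P_{2k}$ (which are unions of stable leaves), so they factor through $\bar\pi$, and $\bar\pi\circ F=\bar F\circ\bar\pi$ with $\bar\nu=\bar\pi_\ast\nu$. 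What your route buys is economy: no auxiliary density $\tilde\psi_k$, no variational norm, and no appeal to an external lemma --- the quotient identity you need is trivial precisely because both of your approximants descend to $\Delta$, whereas the paper's intermediate object $\tilde\psi_k$ does not, which is what forces its citation of \cite{AlvPin}. What the paper's route buys is that it runs parallel to \cite{AlvPin}, so each of its steps can be quoted from there. One small correction: the ``containment of partitions $\mathcal P_{2k}\subset\mathcal P_{2n}$'' plays no role in the argument (and as stated it is backwards: $\mathcal P_{2n}$ refines $\mathcal P_{2k}$); all that is needed is the invariance identity displayed above.
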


\begin{proof}
The proof consists of several steps and follows the argument in
\cite{AlvPin}. First we claim \be\label{diff1} |C_n(\tilde\varphi,
\tilde\psi; \nu)- C_{n-k}(\bar\varphi_k, \tilde\psi; \nu)|\le 2
\|\psi\|_\infty\mathcal R_\varphi(\delta_k). \ee

Indeed, using the fact $C_n(\tilde\varphi, \tilde\psi;
\nu)=C_{n-k}(\tilde\varphi\circ F^k, \tilde\psi; \nu)$ the left hand
side of \ref{diff1} can be written as
\begin{align*}\left|\int(\tilde\varphi\circ
F^k-\bar\varphi_k)\tilde\psi d\nu+\int(\tilde\varphi\circ
F^k-\bar\varphi_k)d\nu\int\tilde\psi d\nu\right|\\ \le
2\|\tilde\psi\|_\infty\int|\tilde\varphi\circ
F^k-\bar\varphi_k|d\nu.
\end{align*}
By definition of $\bar\varphi_k$ for $x\in P$ we have
$$|\tilde\varphi\circ F^k(x)-\bar\varphi_k|\le
\sup_{x, y\in P}|\tilde\varphi(F^k(x))-\tilde\varphi(F^k(y))|\le
\mathcal R_\varphi(\delta_k),$$ which implies desired conclusion.
Now, let $\bar\psi_k\nu$ be the measure whose density with respect
to $\nu$ is $\bar\psi_k$ and let $\tilde\psi_k=
dF_\ast^k(\bar\psi_k\nu)/d\nu.$ Then \be\label{diff2}
|C_{n-k}(\bar\varphi_k, \tilde\psi; \nu)- C_{n-k}(\bar\varphi_k,
\tilde\psi_k; \nu)|\le 2 \|\varphi\|_\infty\mathcal
R_\psi(\delta_k). \ee After substituting and simplifying the
expression we obtain
$$|C_{n-k}(\bar\varphi_k, \tilde\psi; \nu)- C_{n-k}(\bar\varphi_k,
\tilde\psi_k; \nu)|\le
2\|\varphi\|_\infty\left|\int(\tilde\psi-\tilde\psi_k)d\nu\right|.$$
First observe that $F^k_\ast((\tilde\psi\circ
F^k)\nu)=\tilde\psi\nu.$ Letting $|\cdot|$ denote the variational
norm for measures we have
\begin{align*}
\left|\int(\tilde\psi-\tilde\psi_k)d\nu\right|
=|F^k_\ast((\tilde\psi\circ F^k)\nu)-F^k_\ast(\bar\psi_k\nu)|
\\
\le |(\tilde\psi\circ F^k-\bar\psi_k)\nu|=\int|\psi\circ
F^k-\bar\psi_k|d\nu.
\end{align*}
As in the proof of \eqref{diff1} we have  $|\psi\circ
F^k-\bar\psi_k|\le\mathcal R_\psi(\delta_k)$ which implies relation
\ref{diff2}. Combining the inequalities \eqref{diff1}, \eqref{diff2}
and  the equality $C_{n-k}(\bar\varphi_k, \tilde\psi_k;
\nu)=C_n(\bar\varphi_k, \bar\psi_k; \bar\nu)$ from \cite{AlvPin}
finishes the proof.
\end{proof}

It remains to prove decay of correlations for the observables
$\bar\varphi_k$ and $\bar\psi_k$ on the quotient tower. We start
with the usual transformations that simplify the correlation
function. Without lost of generality assume that $\bar\psi_k$ is not
identically zero. Let
$b_k=\left(\int(\bar\psi_k+2\|\bar\psi_k\|_{\infty})d\bar\nu\right)^{-1}$
and $\hat\psi_k=b_k(\bar\psi_k+2\|\bar\psi_k\|_\infty),$ then we
have $\int\hat\psi_kd\bar\nu=1,$  $\|\bar\psi_k\|\le b_k^{-1}\le
3\|\bar\psi_k\|$ and $1\le \hat\psi_k\le 3.$ Moreover, $\hat\psi_k$
is constant on the elements of $\mathcal P_{2k}.$ Thus
\be\begin{split} C_n(\bar\varphi_k, \bar\psi_k; \bar\nu)=
\left|\int(\bar\varphi\circ \bar
F^n)\bar\psi_kd\bar\nu-\int\bar\varphi_kd\bar\nu
\int\bar\psi_kd\bar\nu\right|= \\
\frac{1}{b_k}\left|\int(\varphi\circ \bar
F^n)\hat\psi_kd\bar\nu-\int\bar\varphi_kd\bar\nu \right|\le\\
\frac{1}{b_k}\|\bar\varphi_k\|_{\infty}\int \left| \frac{d\bar
F_\ast^n(\hat\psi_k\bar\nu)}{d\bar m}-\frac{d\bar\nu}{d\bar
m}\right|d\bar m.
\end{split} \ee
Now, letting $\hat\lambda_k=\bar F^{2k}_\ast(\hat\psi_k\bar\nu)$ we
conclude \be\label{ineq}
 C_n(\bar\varphi_k, \bar\psi_k; \bar\nu)\le
\frac{1}{b_k}\|\varphi\|_\infty\left|F^{n-2k}_\ast\hat\lambda_k-\bar\nu\right|.
\ee Note that  the density of $\hat\lambda_k$ belongs to the class
$\mathcal F^+$ (see Lemma 4.1, \cite{AlvPin}). Hence we can apply
 theorem \ref{speed}  to
 $\left|F^{n-2k}_\ast\hat\lambda_k-\bar\nu\right|$   and
 obtain desired estimates for  $C_n(\bar\varphi_k, \bar\psi_k;
\bar\nu).$

\section{Proofs of theorems \ref{main} and \ref{main1}}
We start this section with the following auxiliary construction.
Consider the sequence of stopping times for the points in $\Lambda$
defined as follows:
\begin{equation}
S_0=0, \,\, S_1=R\,\,\text{and}\,\, S_{i+1}=S_i+R\circ f^{S_i},\,\,
\text{for} \,\, i\ge 1.
\end{equation}
Let $\mathcal Q_0$ be the partition of $\Lambda$ into
$\Lambda_{i}$'s. Define the sequence of partitions  $\mathcal Q_k$
as:
 $x, y\in\Lambda$ belong to the same element of $\mathcal Q_k$ if
the following conditions hold.
\begin{itemize}
\item[(i)] $f^R(x)$ and $f^R(y)$ have the same stopping times
 up to time $k-1.$
 \item[(ii)] $f^{S_i}(f^R(x))$ and $f^{S_i}(f^R(y))$ belong to the same
 element of $\mathcal Q_0$ for each $0\le i\le k-1.$
 \item[(iii)] $f^{S_k(Q)}$ is $u$-subset.
\end{itemize}
For $Q\in\mathcal Q_0$ let $R(Q)$ denote its return time. Let $k\ge
1$ be arbitrary integer and define  a sequence
$$\bar\delta_k=\sup_{Q\in\mathcal Q_0}\bar\delta_k(Q),$$ where $\bar\delta_k(Q)$ is
defined as follows:
\begin{enumerate}
\item For $k>R(Q)-1$,  let
$$\bar\delta_k:=\sup_{0\le \ell\le R(Q)-1}
\{\text{diam}(f^\ell(A\cap\gamma)):\gamma\in \Gamma^u, A\in\mathcal
Q_{k-R(Q)+1+\ell}, A\subset Q\}.$$

\item For $k\le R(Q)-1,$  let
$$\bar\delta_k^0(Q):=\sup_{0\le \ell <R(Q)-k}
\{\text{diam}(f^\ell(Q\cap\gamma)):\gamma\in\Gamma^u\},$$
$$\bar\delta_k^+:=\sup_{R(Q)-k \le \ell\le R(Q)-1}
\{\text{diam}(f^\ell(A\cap\gamma)):\gamma\in \Gamma^u, A\in\mathcal
Q_{k-R(Q)+1+\ell}, A\subset Q\}$$
\end{enumerate}
and  define $$\bar\delta_k(Q)=\sup\{\bar\delta_k^0(Q),
\bar\delta_k^+(Q)\}.$$ From Lemma 3.2 in  \cite{AlvPin} we have

\be\label{diameter}
 \text{diam}(\pi(F^k(P))\le C\max\{\beta^k,
\bar\delta_k\} \ee for any $P\in\mathcal P_{2k}$, $k\ge 0,$ and some
$C>0.$ This is the main estimate we use to prove theorems \ref{main}
and \ref{main1}.

In \cite{AlvPin} it was proven that $\bar\delta_k\lesssim
k^{-1/\gamma}$  and $m\{R>k\}\le k^{-1/\gamma}$ for the Solenoid map
with intermittent fixed point. Substituting this into
\eqref{diameter}, we can apply item (ii) of main technical theorem
and conclude the proof of theorem \ref{main1}.

In \cite{BY2} it was shown that for any $(a, b)\in\mathcal A$ the
corresponding H\'enon map admits a Young tower, for which, the tail
of the return time decays exponentially. Therefore to complete the
proof of theorem \ref{hen}  it is sufficient to show that
$\bar\delta_k$ decays exponentially. We will show this in the
following lemma and complete the proof in this case also.

In  \cite{Y1} (see \cite{BY2} for the details of the construction)
it was shown that H\'enon maps satisfy backward contraction on the
unstable leaves, that is there exits $C>0$ such that for all $x,
y\in \Lambda_i$ with $y\in\gamma^u(x)$ and $0\le n\le R_i$
\be\label{contr} \text{dist}(f^n(x), f^n(y))\le C\beta^{R_i-n}. \ee

\begin{lemma}
$\exists C>0$ and  $\beta'\in(0, 1)$ such that $\bar\delta_k\le
C\beta'^k.$
\end{lemma}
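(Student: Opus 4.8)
The plan is to show that $\bar\delta_k$ decays exponentially by exploiting the backward contraction estimate \eqref{contr} together with the exponential decay of the tail of the return time. The key point is to trace through the two cases in the definition of $\bar\delta_k(Q)$ and bound each diameter using the hyperbolic estimates available for the H\'enon tower. Recall that $\bar\delta_k(Q)$ measures the diameters of images $f^\ell(A\cap\gamma)$ along unstable leaves, where $A$ is an element of a refined partition $\mathcal Q_{k-R(Q)+1+\ell}$ sitting inside $Q$. The heuristic is that an element of $\mathcal Q_j$ consists of points whose orbits stay together for $j$ returns, so after iterating forward its unstable diameter is controlled by backward contraction applied on the remaining excursion.

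First I would dispose of the easy case $k\le R(Q)-1$, treating the two contributions $\bar\delta_k^0(Q)$ and $\bar\delta_k^+(Q)$ separately. For $\bar\delta_k^0(Q)$, the range is $0\le \ell<R(Q)-k$, i.e.\ the iterate $f^\ell$ still has at least $k$ steps to go before completing the return $R(Q)$; applying \eqref{contr} with $n=\ell$ and $R_i=R(Q)$ gives $\mathrm{diam}(f^\ell(Q\cap\gamma))\le C\beta^{R(Q)-\ell}\le C\beta^k$, so this term is already exponentially small in $k$. For $\bar\delta_k^+(Q)$ and for the single-case formula when $k>R(Q)-1$, the index $A\in\mathcal Q_{k-R(Q)+1+\ell}$ forces $A$ to be an element where the orbits have not yet separated for roughly $k-R(Q)+\ell$ further returns; the idea is that the image $f^\ell(A\cap\gamma)$ is a small unstable piece whose forward diameter under the next excursion is contracted backward, so its present diameter is bounded by $C\beta$ raised to a power that grows linearly in $k$.

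The main obstacle will be the bookkeeping in the case $k>R(Q)-1$, where the refinement level $k-R(Q)+1+\ell$ and the iterate $\ell\le R(Q)-1$ interact: one must show that for every admissible $\ell$ the resulting exponent is at least of order $k$ after setting $\beta'=\beta^{c}$ for a suitable $c\in(0,1)$. Concretely, I would argue that membership of $x,y$ in a common element of $\mathcal Q_{k-R(Q)+1+\ell}$ means their orbits agree through at least that many returns, and then combine \eqref{contr} on the current excursion of length $R(Q)$ with the contraction accumulated over the remaining $k-R(Q)+\ell$ returns. Since the return times $R_i$ are bounded below (each $R_i\ge 1$ and the integrability plus the hyperbolic product structure prevent arbitrarily short returns from accumulating), the number of iterates needed to realize $j$ returns grows at least linearly in $j$, so an element of $\mathcal Q_j$ has unstable diameter $\lesssim \beta^{cj}$ for some fixed $c>0$.

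Putting these estimates together, in every case $\bar\delta_k(Q)\le C\beta'^{\,k}$ with $\beta'\in(0,1)$ and $C$ uniform in $Q$, whence taking the supremum over $Q\in\mathcal Q_0$ yields $\bar\delta_k\le C\beta'^{\,k}$. Substituting this bound into \eqref{diameter} gives $\mathrm{diam}(\pi(F^k(P)))\le C\max\{\beta^k,\beta'^{\,k}\}$, so $\delta_n$ decays exponentially; together with the exponential tail $m\{R>n\}\le C\theta^n$ for the H\'enon tower from \cite{BY2}, item (i) of the Main Technical Theorem applies and completes the proof of Theorem \ref{main}.
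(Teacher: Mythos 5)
Your proposal is correct and takes essentially the same route as the paper: the same case split following the definition of $\bar\delta_k(Q)$, the same direct application of \eqref{contr} for $\bar\delta_k^0$, and for the remaining cases the same accumulated backward-contraction bookkeeping — orbits in an element of $\mathcal Q_{k-R(Q)+1+\ell}$ stay together through that many returns, each return takes at least one iterate, so the total excursion time exceeds $\ell+k$ and \eqref{contr} yields an exponent of order $k$ (the paper gets exactly $\beta^k$ where you settle for $\beta^{ck}$, which suffices). Your concluding step (substitute into \eqref{diameter}, combine with the exponential tail from \cite{BY2}, invoke item (i) of the Main Technical Theorem) is also exactly how the paper finishes the proof of Theorem \ref{main}.
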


\begin{proof}

A) We start with the case $k\le R(P)-1$ and $0\le \ell< R(P)-k.$ By
\eqref{contr} for any $x\in P$
$$
\text{diam}(f^\ell(P\cap\gamma^u(x)))\le C\beta^{R(P)-\ell}\le
C\beta^k.$$

This implies $\bar\delta_k^0\lesssim \beta^k.$

B) Now, consider the case $k\le R(P)-1,$ and $R(P)-k< \ell \le
R(P)-1.$ Notice that for any $Q\subset P$, $Q\in\mathcal
P_{k-R(P)+\ell+1}$ the stopping times $S_1, ..., S_{\ell'},$
$\ell'=k-R(P)+\ell,$ are constant on $Q$ and $f^{S_i}(Q)\subset
P_i,$ for some $P_i\in\mathcal Q_0,$ $i=1, ..., \ell'.$ Let $r_1,
..., r_{\ell'-1}$ be the return times of these  elements. By
\eqref{contr} we have
$$
\text{diam}(Q\cap\gamma^u) \le C\beta^{R(P)+r_1+ ... +r_{\ell'-1}}
$$

Since $R(P)+r_1+...+r_{\ell'-1}\ge l+k$ using again the
inequality \eqref{contr}
$$\text{diam}(f^{\ell}(Q\cap\gamma^u))\le C\beta^{R(P)+r_1+ ... +r_{\ell'-1}}\le C\beta^k.$$
This implies $\bar\delta_k^+\lesssim \beta^k,$ which  finishes the proof when $k\le R(P).$ The
case $k>R(P)$ is treated as B).
\end{proof}

\subsection*{Acknowledgements}Many thanks  to my supervisor S. Luzzatto for his
encouragements and guidance.

\end{document}